\title{On the accuracy of the Chakrabarti-Hudson approximation to $\pi$}
\author{Mark B. Villarino\footnote{corresponding author; email: mark.villarino@ucr.ac.cr }\\
Escuela de Matem\'atica, Universidad de Costa Rica,\\
10101 San Jos\'e, Costa Rica}
\date{\today}
\newtheorem{lemma}{Lemma}
\newtheorem{thm}{Theorem}
\newtheorem{cor}{Corollary}
\numberwithin{equation}{section}
\def\section{\@startsection{section}{1}{\z@}{-3.5ex plus -1ex minus
			  -.2ex}{2.3ex plus .2ex}{\large\bf}}
\def\subsection{\@startsection{subsection}{2}{\z@}{-3.25ex plus -1ex
			  minus -.2ex}{1.5ex plus .2ex}{\normalsize\bf}}
\renewcommand{\@dotsep}{200} 
\renewcommand{\geq}{\geqslant}  
\renewcommand{\leq}{\leqslant}  
\newcommand{\half}{{\mathchoice{\thalf}{\thalf}{\shalf}{\shalf}}}
\newcommand{\shalf}{{\scriptstyle\frac{1}{2}}} 
\newcommand{\thalf}{\tfrac{1}{2}} 
\newcommand{\third}{\tfrac{1}{3}}   
\newcommand{\twothirds}{\tfrac{2}{3}} 
\newcommand{\word}[1]{\quad\mbox{#1}\quad} 
\begin{document}

\maketitle

\begin{abstract}
We obtain rigorous upper and lower bounds for the error in the recent
approximation for $\pi$ proposed by \textsc{Chakrabarti \& Hudson}.
\end{abstract}

\textbf{AMS Subject Classification: } 11A03, 11A04, 01A08

\textbf{Key Words:} Archimedes' method of approximating $\pi$

\textbf{}

\section{Introduction}

In 2003, \textsc{Chakrabarti} and \textsc{Hudson} \cite{CH} proposed a
new formula in the spirit of \textsc{Archi\-me\-des} for the computation
of $\pi$, namely
\begin{equation}
\label{CH}
\pi \approx \frac{1}{30} \{32\pi_n + 4\Pi_{2n} - 6a_n\},
\end{equation}
where $\pi_n$ is the perimeter of an inscribed regular polygon of $n$
sides in a circle of perimeter $\pi$, $\Pi_n$ is the perimeter of a
circumscribed regular polygon of $n$ sides in a circle of perimeter
$\pi$, and $a_n$ is the area of an inscribed regular polygon of $n$
sides in a circle of \emph{area}~$\pi$.

Archimedes, in \emph{The Measurement of the Circle} (MC),
see~\cite{Hea}, obtains his own approximation by starting with the
(evident) inequality
\begin{equation}
\pi_n < \pi < \Pi_n,
\end{equation}
and takes $n = 6, 12, 24, 48, 96$, respectively. He performs a
brilliant \emph{tour de force} of manipulating and rounding the
inequalities resulting from continued fraction expansions of the
square roots which arise in the computations of $\pi_n$ and $\Pi_n$,
whence he obtains his justly famous bounds
\begin{equation}
3+\frac{10}{71} < \pi < 3+\frac{1}{7}.
\end{equation}

It is generally recognized, see \cite{Knorr}, that the extant MC is a
post-Archimedean revision of Archimedes' original and far more
comprehensive treatment of the circle. For example, \textsc{Heron},
\emph{Metrica} I,~32 \cite{Heron}, quotes a theorem of Archimedes from
(MC), which is \emph{not present in the extant version}. The theorem
states that \emph{the area of a circular sector exceeds four-thirds the area
of the greatest inscribable triangle}. In terms of the notation above,
this inequality affirms that:
\begin{equation}
\label{Hero}
\pi > \third (4\pi_{2n} - \pi_n).
\end{equation}

The right hand side of \eqref{Hero} is a \emph{(generalized) convex combination} of
$\pi_{2n}$ and $\pi_n$ since $\frac{4}{3} - \frac{1}{3} = 1$. (We used the adjetive ``generalized" since usually the coefficients in a convex combination are taken to be positive.  We will supress the word ``generalized" from here on, hoping that the reader will keep this comment in mind.)
Moreover, its form argues for the fact that the original treatise of
Archimedes studied convergence-improvement inequalities derived by
means of geometric theory. Unfortunately, such theoretical aspects
apparently proved too subtle for the uses of later commentators such as
Heron.

Observe that the formula of \textsc{Chakrabarti \& Hudson} \eqref{CH},
too, is a convex combination of its terms.

Such convex combinations did not reappear until the sixteenth and
seventeenth centuries in the work of \textsc{Snell}~\cite{Snell} and
\textsc{Huygens}~\cite{Huy}. The former stated and the latter proved
the following upper bound inequality:
\begin{equation}
\pi < \twothirds \pi_n + \third \Pi_n,
\end{equation}
which is a beautiful generalization of Archimedes' original
inequality. Both Snell and Huygens use it in the form
\begin{equation}
\pi \approx \twothirds \pi_n + \third \Pi_n
\end{equation}
and they justifiably extol the convergence-rate improvement produced
by this convex combinations of quantities already computed.

Now, Chakrabarti and Hudson ask what happens if we use the corresponding \emph{areal} convex
combination
\begin{equation}
\pi \approx \twothirds a_n + \third A_n
\end{equation}
to approximate $\pi$, where $A_n$ is the area of a circumscribed
regular polygon of $n$ sides in a circle of area~$\pi$. They prove the
very interesting result that
\begin{equation}
\label{CH1}
\lim_{n\to\infty} \frac{\frac{2}{3}\pi_n + \frac{1}{3}\Pi_n - \pi}
{\frac{2}{3}a_n + \frac{1}{3}A_n - \pi} = \frac{3}{8}\,,
\end{equation}
which shows that \emph{the areas converge much more slowly than the
perimeters to~$\pi$}.

Then they suppress the limit notation, treat \eqref{CH1} \emph{as an
equation for~$\pi$} and when they reduce it algebraically they obtain
the approximative formula~\eqref{CH}. Passing to trigonometric functions, \eqref{CH} becomes:
\begin{equation}
\label{CH2}
\pi \approx \frac{n}{30} \biggl\{ 32 \sin\Bigl( \frac{\pi}{n} \Bigr)
+ 4\tan\Bigl( \frac{\pi}{n} \Bigr) - 3\sin\Bigl( \frac{2\pi}{n} \Bigr)
\biggr\} \equiv \Pi(n).
\end{equation}

It is worthwhile to point out that the procedures producing the improved convex combinations cited above are today instances of ``\textsc{Richardson} extrapolation" (see \cite{Hld}).  In the case of the \textsc{Chakrabarti-Hudson} formula, we can obtain it in the following way.  Put $$x:=a\sin x+b\tan x+c\sin(2x),$$ with unknown coeficients and match the first three terms of the \textsc{Taylor} series, which leads to the linear system:\begin{equation*}
\label{ }
a+2b+c=1\ \ \ \ \frac{a}{6}+\frac{b}{3}-\frac{8c}{6}=0\ \ \ \ \frac{a}{120}+\frac{2b}{15}+\frac{32c}{120}=0.
\end{equation*}The error term then has initial term $\frac{1}{105}x^6$, which, indeed was obtained by \textsc{Chakrabarti-Hudson}.  

Nevertheless, it is of interest to see how the seemingly \emph{ad hoc} method of the authors produced the same result as the modern method.

Finally, the authors offer some numerical studies of the accuracy of
\eqref{CH2}.

The investigation of the authors needs to be completed in several areas.
\begin{itemize}
\item
They do not state whether the approximation \eqref{CH2} is in excess
or in defect\dots\ in fact, we will prove that it is in \emph{excess}.
\item
They do not develop any rigorous error analysis\dots\ no upper bounds
for the error nor lower bounds; we will present such bounds.
\item
They offer a non-standard definition of accuracy, which they call
``precision''. This makes it difficult to compare their numerical
results with standard error studies. We will present the standard
definition of ``correct significant digits'' and reformulate the
discussion of the accuracy of~\eqref{CH2} in light of our error
bounds.
\end{itemize}

Recently, \textsc{M. Szyszkowicz} (see \cite{MS}) exploited the Richardson method to present eighteen  different approximations (!) for $\pi$ of which the most accurate is (in his notation)
\begin{equation}
\label{M12}
M_{12}:=\frac{\sin x\cdot (187+24\cos x-\cos 2x))}{10+90\cos x}=x-\frac{x^9}{17640}-\frac{x^{11}}{226385}-\cdots.
\end{equation}which in in \emph{excess}.  (The formula given in his paper has a misprint...an extra factor ``x" in the numerator which should not be there.)  We point out that the third convergent of the continued fraction expansion of $\frac{\sin x}{x}$ given later on (see \eqref{cf}) gives the approximation\begin{equation}
\label{cf3}
\frac{\sin x \cdot (51+48\cos x+6\cos^2 x)}{80+25\cos x}=x-\frac{x^9}{44100}-\frac{x^{11}}{226380}-\cdots
\end{equation}also in \emph{excess}, and which is of comparable complexity, but is $2\frac{1}{2}$ times more accurate asymptotically.  It would be interesting to develop rigorous error bounds for his eighteen approximation formulas.


\section{Error Analysis}

We will prove the following theorem:

\begin{thm}
If $n\geq 32$ then the following inequality is valid:
\begin{equation}
\label{bound}
\boxed{ \frac{1}{105} \Bigl(\frac{\pi}{n}\Bigr)^6 
< \frac{\Pi(n) - \pi}{\pi}
< \frac{1}{104\frac{7}{10}} \Bigl(\frac{\pi}{n}\Bigr)^6 }
\end{equation}
where the lower-bound constant $\dfrac{1}{105}$ is the best possible.
\end{thm}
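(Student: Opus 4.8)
The plan is to set $x := \pi/n$, so that $0 < x \leq \pi/32$ for $n \geq 32$ and $\pi = nx$. Writing $\Pi(n)$ in terms of $x$ gives
\[
\frac{\Pi(n)-\pi}{\pi} = \frac{1}{30x}\bigl\{32\sin x + 4\tan x - 3\sin 2x\bigr\} - 1 =: f(x),
\]
so the whole theorem reduces to a pair of inequalities for the one-variable function $f$ on the fixed interval $(0,\pi/32]$. First I would record the Taylor expansion of the bracket: combining the standard series for $\sin x$, $\sin 2x$ and $\tan x$, the coefficients of $x$, $x^3$ and $x^5$ cancel and one is left with
\[
32\sin x + 4\tan x - 3\sin 2x = 30x + \tfrac{2}{7}x^7 + \tfrac{1}{12}x^9 + \cdots,
\]
whence $f(x) = \tfrac{1}{105}x^6 + \tfrac{1}{360}x^8 + \cdots$. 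This single computation already yields the two qualitative facts advertised in the introduction: the approximation is in excess, and $\lim_{x\to 0^+} f(x)/x^6 = \tfrac{1}{105}$, which is precisely the assertion that the lower constant $\tfrac{1}{105}$ (attained in the limit $n\to\infty$) is best possible.

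To obtain rigorous two-sided bounds I would first clear the tangent. Multiplying by $\cos x>0$ and using $\sin 2x = 2\sin x\cos x$ together with $\sin 2x\cos x = \tfrac12(\sin 3x+\sin x)$ gives the clean identity
\[
\bigl(30x\,f(x)\bigr)\cos x = 16\sin 2x + \tfrac{5}{2}\sin x - \tfrac{3}{2}\sin 3x - 30x\cos x,
\]
whose right-hand side is a genuine trigonometric polynomial (plus the term $30x\cos x$) with no removable singularity. The two desired inequalities $\tfrac{2}{7}x^7 < 30x\,f(x) < \tfrac{100}{349}x^7$ — equivalent to \eqref{bound} after dividing by $30x^7$ — then become
\[
\tfrac{2}{7}x^7\cos x < 16\sin 2x + \tfrac{5}{2}\sin x - \tfrac{3}{2}\sin 3x - 30x\cos x < \tfrac{100}{349}x^7\cos x
\]
on $(0,\pi/32]$. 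I would prove each side by replacing $\sin x$, $\sin 2x$, $\sin 3x$ with Taylor polynomials carrying Lagrange remainders; because the sine series is alternating, truncating after a suitable number of terms furnishes honest one-sided bounds, and the problem collapses to verifying a polynomial inequality in $x$ on the compact interval.

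The main obstacle will be the \emph{upper} bound. The gap between the two target constants is only $\tfrac{100}{349}-\tfrac{2}{7} = \tfrac{2}{2443}$, so the inequality $30x\,f(x) < \tfrac{100}{349}x^7$ is very nearly an equality: near $x=\pi/32$ the term $\tfrac{1}{12}x^9$ almost exactly consumes the slack $\tfrac{2}{2443}x^7$. Consequently a crude remainder estimate will not close the argument — one must carry the expansion far enough (through the $x^9$, and probably the $x^{11}$, term) and bound the tail sharply enough that the residual polynomial inequality still holds at the right endpoint. An attractive alternative is to show that $h(x) := (30x\,f(x))/x^7$ is increasing on $(0,\pi/32]$, so that its supremum is the endpoint value $h(\pi/32)$; both bounds then follow from $h(0^+)=\tfrac{2}{7}$ together with a single verification that $h(\pi/32) < \tfrac{100}{349}$. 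The cost of this route is a monotonicity proof for a tangent-containing expression, which I would again reduce, via the $\cos x$ identity above, to the sign of a trigonometric polynomial.
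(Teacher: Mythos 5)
Your reduction to the one-variable function on $(0,\pi/32]$, the cancellation through $x^5$, and the expansion $30x+\tfrac{2}{7}x^7+\tfrac{1}{12}x^9+\cdots$ all agree with the paper, which follows the same skeleton: Maclaurin expansions with explicit remainder control, reduction to a polynomial inequality on the interval, with the constant $\tfrac{1}{105}$ and its optimality read off from the leading term. Where you genuinely diverge is in the treatment of $\tan x$. The paper keeps the tangent and bounds its degree-$11$ Lagrange remainder by computing $\frac{d^{11}}{dx^{11}}\tan x$ explicitly as a polynomial in $\tan^2 x$ and exploiting its monotonicity on $[0,\pi/32]$, which yields the two-sided estimate with tail term between $0$ and $\tfrac{1}{85}x^{11}$; you instead multiply through by $\cos x$ and use $\sin 2x\cos x=\tfrac12(\sin 3x+\sin x)$ to land on $16\sin 2x+\tfrac52\sin x-\tfrac32\sin 3x-30x\cos x$ (your identity checks out), so that only alternating sine and cosine series are needed. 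Your device is arguably cleaner — it avoids the eleventh derivative of the tangent entirely — at the cost of carrying the extra factor $\cos x$ on both target bounds. However, your proposal stops exactly where the theorem's content lives: the final polynomial verification is not executed, and as you yourself observe, the upper bound has a margin of only about $\tfrac{2}{2443}x^7$ at $x=\pi/32$, almost all of which is consumed by the $\tfrac{1}{12}x^9$ term, so the $x^{11}$ coefficient and the remainder tails must actually be computed, not merely promised. The paper does carry this out: its bracketing gives $\tfrac{1}{105}x^6+\tfrac{1}{360}x^8+\tfrac{10531}{26507250}x^{10}$ as an upper bound, which evaluated at $x=\pi/32$ yields $\tfrac{1}{104.705\ldots}x^6<\tfrac{1}{104.7}x^6$ — a clearance of roughly one part in $2\times 10^4$. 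Until you perform the analogous endpoint computation (or the monotonicity argument for $h(x)$ you sketch as an alternative, which itself requires controlling the sign of a derivative), the upper bound remains unproved; the lower bound and best-possibility, by contrast, follow easily from your expansion once the positivity of the tail is secured.
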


An immediate consequence is:

\begin{cor}
The approximation $\pi \approx \Pi(n)$ is in \textbf{excess}.
\qed
\end{cor}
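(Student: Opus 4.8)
The plan is to reduce the inequality entirely to an analysis of the single function
\begin{equation*}
\Pi(n) = \frac{n}{30}\Bigl\{32\sin\theta + 4\tan\theta - 3\sin 2\theta\Bigr\},\qquad \theta := \frac{\pi}{n},
\end{equation*}
and to study the relative error $E(\theta) := \dfrac{\Pi(n)-\pi}{\pi}$ as a function of the single real variable $\theta\in(0,\pi/32]$. First I would rewrite $\Pi(n)-\pi$ using $n=\pi/\theta$, obtaining
\begin{equation*}
\Pi(n)-\pi = \frac{\pi}{30\,\theta}\Bigl\{32\sin\theta + 4\tan\theta - 3\sin 2\theta - 30\,\theta\Bigr\},
\end{equation*}
so that the whole problem becomes a question about the combination $f(\theta):=32\sin\theta+4\tan\theta-3\sin 2\theta-30\theta$. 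The constants $32$, $4$, $-3$, $-30$ were chosen precisely so that the low-order terms of the Maclaurin expansion cancel: I would compute the series of $f$ and expect the $\theta^1$, $\theta^3$, $\theta^5$ coefficients to vanish identically, leaving $f(\theta)=c\,\theta^7+\cdots$ with a positive leading coefficient. This simultaneously proves positivity for small $\theta$ (hence the excess claim and the lower bound) and pins down the constant: dividing by $\pi/(30\theta)$ and by $\pi$ converts $c\,\theta^7$ into a leading term of the form $\frac{1}{105}\theta^6$, which identifies $\tfrac{1}{105}$ as the exact asymptotic constant and therefore as the best possible lower-bound constant.

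The core of the rigorous argument is turning these asymptotic statements into honest two-sided inequalities valid for all $\theta\le\pi/32$, not merely in the limit. The natural tool is to isolate the three transcendental pieces and bound each by explicit truncated Taylor polynomials with controlled remainder. For $\sin\theta$ and $\sin 2\theta$ the alternating Maclaurin series gives clean one-sided polynomial bounds by the Leibniz criterion; the genuinely delicate term is $\tan\theta$, whose Taylor coefficients are all positive (involving Bernoulli numbers) so the series is not alternating and does not yield a free enclosure. I would handle $\tan\theta$ either by writing $\tan\theta=\sin\theta/\cos\theta$ and bounding numerator and denominator separately, or by using a truncated series together with an explicit remainder estimate valid on $(0,\pi/32]$, where $\theta$ is small enough ($\pi/32\approx 0.098$) that the tail is easily dominated. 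Assembling the three bounds, I would show that $f(\theta)/\theta^7$ lies strictly between two constants that, after the division by $\pi$, straddle the required window $\bigl(\tfrac{1}{105},\,\tfrac{1}{104.7}\bigr)$.

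The lower bound and the best-possible claim should follow cleanly from the leading coefficient once monotonicity of $f(\theta)/\theta^7$ (or of the relative error divided by $(\pi/n)^6$) is established; a convenient route is to prove that this quotient is \emph{decreasing} on $(0,\pi/32]$, so that its supremum as $\theta\to 0^+$ is exactly $\tfrac{1}{105}$, giving the sharp lower bound, while its value at the right endpoint $\theta=\pi/32$ (that is, $n=32$) supplies the numerical upper constant $\tfrac{1}{104.7}$. Monotonicity would be verified by differentiating and reducing to a sign condition on yet another explicit trigonometric combination, again attacked by Taylor enclosures.

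I expect the main obstacle to be precisely the control of the non-alternating $\tan\theta$ term and, more importantly, the sharpness at the upper end: producing the narrow constant $\tfrac{1}{104\frac{7}{10}}$ rather than a looser bound requires that the error incurred in the polynomial enclosures be smaller than the gap $\tfrac{1}{104.7}-\tfrac{1}{105}$, which is tiny. This forces the Taylor approximations to be carried to high enough order that the accumulated remainder is provably below this margin at $\theta=\pi/32$; verifying that the chosen truncation order suffices, and that the resulting polynomial inequality indeed holds on the entire interval rather than just at the endpoint, is where the bookkeeping will be heaviest.

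Once the two-sided bound \eqref{bound} is in hand, the Corollary is immediate: the lower bound $\tfrac{1}{105}(\pi/n)^6>0$ forces $\Pi(n)-\pi>0$ for every $n\ge 32$, so the approximation is in excess.
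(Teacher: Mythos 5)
Your proposal follows the paper's own route: the corollary is an immediate consequence of the main theorem's strictly positive lower bound $\tfrac{1}{105}(\pi/n)^6$ on the relative error, and your reconstruction of that theorem---alternating-series enclosures for the two sine terms, a Lagrange-remainder estimate for the non-alternating tangent series on $(0,\pi/32]$, and assembly into a two-sided polynomial bound with vanishing coefficients through order five---is exactly the paper's argument. The only cosmetic difference is that for the upper constant the paper simply factors out $\tfrac{1}{105}x^6$ and evaluates the remaining bracket at $x=\pi/32$ rather than proving monotonicity of the quotient, but both routes yield the same constant.
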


The inequality \eqref{bound} bounds the \emph{relative error} in the
approximation $\Pi(n) \approx \pi$. It is well known that an
approximation has $n$ \emph{correct significant digits} iff the
relative error does not exceed $\dfrac{(\half)}{10^n}$,
see~\cite{Hld}. Therefore, we can say:

\begin{cor}
The approximation $\pi \approx \Pi(n)$ has about $(6\log_{10} n - 1.27)$
correct significant digits.
\qed
\end{cor}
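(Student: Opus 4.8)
The plan is to convert the relative-error bounds of the Theorem into a count of digits by inverting the correct-significant-digits criterion stated just above the corollary. That criterion says the approximation $\Pi(n)\approx\pi$ carries $d$ correct significant digits precisely when the relative error does not exceed $\tfrac12\cdot 10^{-d}$. Since Corollary 1 guarantees the approximation is in excess, the relative error is the positive quantity $E(n):=\frac{\Pi(n)-\pi}{\pi}$, and the Theorem pins it down tightly:
$$\frac{1}{105}\Bigl(\frac{\pi}{n}\Bigr)^6 < E(n) < \frac{1}{104\frac{7}{10}}\Bigl(\frac{\pi}{n}\Bigr)^6.$$
The key observation is that the two bounding constants $105$ and $104\frac{7}{10}$ are so close that $E(n)$ is essentially determined to the precision we need, so I would work with the representative value $E(n)\approx \frac{1}{105}\bigl(\tfrac{\pi}{n}\bigr)^6$ and track the mild effect of the bracketing only at the very end.

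The largest admissible $d$ is obtained by solving the boundary equation $E(n)=\tfrac12\cdot 10^{-d}$, i.e. $10^{-d}=2E(n)$. Taking base-$10$ logarithms gives
$$d=-\log_{10}\bigl(2E(n)\bigr)=-\log_{10}2-\log_{10}E(n).$$
Substituting the representative value and expanding $\log_{10}E(n)=6\log_{10}\pi-\log_{10}105-6\log_{10}n$ yields
$$d=6\log_{10}n+\bigl(\log_{10}105-6\log_{10}\pi-\log_{10}2\bigr),$$
so the entire content of the corollary reduces to evaluating the additive constant in parentheses.

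Numerically, $\log_{10}105\approx 2.0212$, $6\log_{10}\pi\approx 2.9829$, and $\log_{10}2\approx 0.3010$, whence the constant is $\approx -1.263$. Replacing the lower constant $105$ by the upper constant $104\frac{7}{10}$ shifts this only in the fourth decimal place, so the bracketing of $E(n)$ leaves the leading behavior unambiguous at the stated precision; rounding the magnitude up to $1.27$ makes the estimate $d\approx 6\log_{10}n-1.27$ a safe, slightly conservative guarantee on the number of correct digits. The one point deserving care is the passage from the inequality-defined integer count to the continuous formula: strictly speaking $d$ is $\lfloor\,\cdot\,\rfloor$ of the right-hand side, and the word \emph{about} absorbs both this floor and the $\log_{10}2$ offset produced by the factor $\tfrac12$ in the criterion. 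I expect this bookkeeping — retaining the factor $\tfrac12$ and confirming that both error constants round to the same reported value — to be the only real obstacle; everything else is a one-line logarithmic inversion.
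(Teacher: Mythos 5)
Your proposal is correct and is precisely the (implicit) argument the paper intends: the corollary is stated as an immediate consequence of the theorem, obtained exactly by your one-line inversion of the criterion $E(n)<\tfrac12\cdot 10^{-d}$, giving the constant $\log_{10}105-6\log_{10}\pi-\log_{10}2\approx-1.263$, conservatively reported as $-1.27$. One microscopic quibble: replacing $105$ by $104\tfrac{7}{10}$ shifts the constant by $\log_{10}\bigl(105/104.7\bigr)\approx 0.0012$, i.e.\ in the \emph{third} decimal place rather than the fourth, but this changes nothing since both values are absorbed by the word ``about.''
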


Now we turn to the proof of \eqref{bound}. We will use the
\textsc{MacLaurin} expansions of the functions involved. Define:
\begin{equation}
\label{fx}
f(x) := \frac{\pi}{30x} (32\sin x + 4\tan x - 3\sin 2x).
\end{equation}
Then we see that 
\begin{equation}
\label{fx1}
\Pi(n) = f\Bigl( \frac{\pi}{n} \Bigr).
\end{equation}

Since the sum of a convergent alternating series is bracketed by two
consecutive partial sums if the absolute values of the terms decrease monotonically (which here occurs), we obtain:

\begin{lemma}
The following inequality is valid for all $ \frac{\pi}{32}\geq x> 0$: 
\begin{equation}
\label{32sinx}
32\biggl( x - \frac{x^3}{3!} + \frac{x^5}{5!} - \frac{x^7}{7!} \biggr)
< 32\sin x
< 32\biggl( x - \frac{x^3}{3!} + \frac{x^5}{5!} - \frac{x^7}{7!}
+ \frac{x^9}{9!} \biggr).
\end{equation}
\qed
\end{lemma}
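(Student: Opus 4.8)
The plan is to divide the entire chain by $32$, reducing the claim to the classical two-sided Maclaurin bracketing
\[
x - \frac{x^3}{3!} + \frac{x^5}{5!} - \frac{x^7}{7!} < \sin x < x - \frac{x^3}{3!} + \frac{x^5}{5!} - \frac{x^7}{7!} + \frac{x^9}{9!},
\]
valid for $x>0$, with equality throughout at $x=0$. The quickest justification is the one indicated in the text: the Maclaurin series $\sum_{k\geq 0}(-1)^k x^{2k+1}/(2k+1)!$ converges, and the Leibniz estimate brackets its sum between two consecutive partial sums. One caveat must be flagged, however. The Leibniz bracketing requires the terms $x^{2k+1}/(2k+1)!$ to be monotonically decreasing in $k$, which holds only while $x^2<(2k+2)(2k+3)$. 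In the application $x=\pi/n$ with $n\geq 32$ one has $x\leq \pi/32<0.1$, so every term past the first is already tiny and the monotonicity---hence the Leibniz conclusion---is immediate; thus the cited argument is entirely adequate where the lemma is actually used.

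To secure the statement as printed, for \emph{all} $x\geq 0$ and independently of any size restriction, I would instead build an integration ladder. Starting from $\cos t\leq 1$ and integrating over $[0,x]$ gives $\sin x\leq x$; integrating that yields $\cos x\geq 1-x^2/2!$; integrating again gives $\sin x\geq x-x^3/3!$; and so on, the sign of the bound flipping at each step and each inequality being inherited because the integral of a nonnegative integrand over $[0,x]$ is nonnegative. Continuing the ladder through the seventh integration produces exactly the lower bound of \eqref{32sinx}, and two further integrations produce the upper bound. Multiplying the resulting inequalities by $32$ restores the stated chain.

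The only genuine subtlety is the bookkeeping: one must track the alternation of the inequality direction through the ladder and verify that every step is strict for $x>0$. The base inequality $\cos t\leq 1$ is strict except at isolated points, so the first integral inequality is strict on $(0,\infty)$, and thereafter each integrand difference is the running integral of a function strictly positive almost everywhere, hence itself strictly positive; this yields the strict $<$ in \eqref{32sinx} for $x>0$. At $x=0$ all three members vanish and the inequalities degenerate to equalities, which is harmless since the lemma is invoked only at $x=\pi/n>0$.
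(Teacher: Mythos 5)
Your proposal is correct, and it goes beyond what the paper actually does. The paper's entire proof is the one-line remark that ``the sum of a convergent alternating series is bracketed by two consecutive partial sums,'' with the lemma stamped \emph{q.e.d.}\ in its statement. You correctly identify the hidden hypothesis in that remark: the Leibniz bracketing requires the terms $x^{2k+1}/(2k+1)!$ to decrease from the relevant index onward, which fails once $x$ is large (roughly $x^2 \geq 110$ for the partial sums appearing in \eqref{32sinx}), so the paper's justification does not literally cover ``all $x \geq 0$'' as claimed. Your integration ladder --- starting from $\cos t \leq 1$ and integrating repeatedly over $[0,x]$, tracking the sign flip at each step --- closes that gap and proves the inequality for every $x > 0$, with the strictness correctly propagated because each successive difference is the integral of a function positive almost everywhere on $(0,x)$. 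What each approach buys: the paper's citation is instant and entirely adequate for the application (the lemma is only ever invoked at $x = \pi/n \leq \pi/32$, where the terms decrease from the outset), while your ladder is equally elementary, validates the lemma in the generality in which it is stated, and makes the strict inequalities explicit. The only residual quibble, which you already flag, is that at $x = 0$ all three members of \eqref{32sinx} coincide, so the strict ``$<$'' should really read ``valid for all $x > 0$''; this affects the paper's statement as much as your proof and is harmless where the lemma is used.
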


The \textsc{MacLaurin} expansion of the tangent function is not an
alternating series. But we can still use the \textsc{Lagrange} form of
the remainder to obtain:

\begin{lemma}
The following inequality is valid for $0< x \leq \frac{\pi}{32}$:
\begin{multline}
\label{4tanx}
4\biggl( x + \frac{1}{3} x^3 + \frac{2}{15} x^5 + \frac{17}{315} x^7 
+ \frac{62}{2835} x^9 \biggr) < 4\tan x
\\
< 4\biggl( x + \frac{1}{3} x^3 + \frac{2}{15} x^5 + \frac{17}{315} x^7
+ \frac{62}{2835} x^9 + \frac{1}{85} x^{11} \biggr).
\end{multline}
\end{lemma}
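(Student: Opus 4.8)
The plan is to apply \textsc{Taylor}'s theorem with the \textsc{Lagrange} remainder to $f(x):=\tan x$, truncated immediately after the $x^9$ term. Write its \textsc{MacLaurin} series as $\tan x = \sum_{k\ge 1} c_k x^{2k-1}$, where
\[
c_1 = 1,\quad c_2 = \frac13,\quad c_3 = \frac{2}{15},\quad c_4 = \frac{17}{315},\quad c_5 = \frac{62}{2835},\quad c_6 = \frac{1382}{155925},\ \dots
\]
are the familiar tangent coefficients, and abbreviate the degree-nine partial sum by $p(x)$, so that the two sides of \eqref{4tanx} are $4p(x)$ and $4\bigl(p(x) + \frac{1}{85}x^{11}\bigr)$. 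The structural fact underpinning everything is that \emph{every} derivative of $\tan$ is positive on $\bigl(0,\frac{\pi}{2}\bigr)$. I would obtain this either from the recursion $f' = 1 + f^2$ by induction, or, more transparently, by noting that each $c_k > 0$ and that the series converges on $\bigl(-\frac{\pi}{2},\frac{\pi}{2}\bigr)$; since differentiating a power series with nonnegative coefficients again yields one with nonnegative coefficients, $f^{(m)}(x) > 0$ for every $m \ge 0$ and every $x \in \bigl(0,\frac{\pi}{2}\bigr)$.

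Because $\tan$ is odd, its degree-ten \textsc{MacLaurin} coefficient vanishes, so \textsc{Taylor}'s theorem furnishes, for each $x \in \bigl(0,\frac{\pi}{32}\bigr]$, a point $\xi \in (0,x)$ with
\[
\tan x = p(x) + \frac{f^{(11)}(\xi)}{11!}\,x^{11}.
\]
The lower bound is then immediate: since $f^{(11)}(\xi) > 0$ by the positivity just established, $\tan x > p(x)$, which is exactly the left half of \eqref{4tanx} after multiplying by $4$. For the upper bound I would exploit that $f^{(12)} > 0$, so $f^{(11)}$ is strictly increasing on $\bigl(0,\frac{\pi}{2}\bigr)$; hence $f^{(11)}(\xi) \le f^{(11)}\bigl(\frac{\pi}{32}\bigr)$ for every admissible $\xi$, and it suffices to prove the single numerical inequality
\[
\frac{f^{(11)}\bigl(\frac{\pi}{32}\bigr)}{11!} \le \frac{1}{85}.
\]

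To make this rigorous I would either write $f^{(11)}$ as its derivative polynomial in $t = \tan x$ (a polynomial with nonnegative integer coefficients, hence increasing in $t$) and evaluate it at a rigorous upper estimate $t_0 \ge \tan\frac{\pi}{32}$, or evaluate $f^{(11)}\bigl(\frac{\pi}{32}\bigr)/11! = \sum_{k\ge 6}\binom{2k-1}{11} c_k \bigl(\frac{\pi}{32}\bigr)^{2k-12}$ directly, summing the first few terms exactly and dominating the remaining tail by a geometric series (the ratio $c_{k+1}/c_k$ stays below $\frac{4}{\pi^2}$, so on this tiny interval the tail is utterly negligible).

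The main obstacle is precisely this last inequality, because it is \emph{nearly sharp}: the left-hand side equals $\approx 0.011755$, while $\frac{1}{85} = 0.0117647\ldots$, a margin of under one part in a thousand. Consequently the crude majorization $f^{(11)}(\xi) \le f^{(11)}\bigl(\frac{\pi}{32}\bigr)$ cannot be loosened, and every ingredient of the numerical verification --- the value used for $\tan\frac{\pi}{32}$, the truncation of the tail, and all rounding --- must be controlled tightly enough that the rigorous upper estimate still falls below $\frac{1}{85}$. It is this delicacy that pins down the otherwise unmotivated constant $\frac{1}{85}$, and, through the eventual assembly of the three \textsc{MacLaurin} estimates, the constant $104\frac{7}{10}$ appearing in the main theorem.
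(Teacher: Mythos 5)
Your proposal is correct and follows essentially the same route as the paper: Taylor's theorem of order $11$ with the Lagrange remainder, positivity of the eleventh derivative of $\tan$ for the lower bound, and its monotonicity on the interval to reduce the upper bound to the single numerical estimate $f^{(11)}\bigl(\tfrac{\pi}{32}\bigr)/11! < \tfrac{1}{85}$ (the paper evaluates the explicit derivative polynomial, obtaining $R(\pi/32) = 469223.99\ldots$ and hence the factor $1/85.06\ldots$). Your observation about the near-sharpness of the constant matches the paper's computation exactly.
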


\begin{proof}
The \textsc{MacLaurin} expansion of order~$11$ of the tangent function 
is
\begin{equation}
\tan x = x + \frac{1}{3} x^3 + \frac{2}{15} x^5 + \frac{17}{315} x^7 
+ \frac{62}{2835} x^9 + \frac{1}{11!} R(\theta_{11}) x^{11},
\end{equation}
where
\begin{align*}
R(x) := \frac{d^{11}}{dx^{11}}(\tan x) = 256(\tan^2x + 1)
& (155925 \tan^{10}x + 467775 \tan^8x + 509355 \tan^6x
\\
& + 238425 \tan^4x  + 42306 \tan^2x + 1382) 
\end{align*}
and $0 \leq \theta_{11} \leq \frac{\pi}{32}$. The function $R(x)$
monotonically increases in the interval, whence
\begin{equation}
353792 = R(0) < R(\theta_{11}) \leq R\Bigl( \frac{\pi}{32} \Bigr)
= 469223.9941\dots
\end{equation}
and we conclude (since $x \geq 0$) that
\begin{equation}
0 \leq \frac{1}{11!} R(\theta_{11}) x^{11}
\leq \frac{1}{11!} \cdot 469223.9941\dots x^{11}
= \frac{1}{85.06\dots} x^{11} < \frac{1}{85} x^{11}.
\end{equation}
This completes the proof.
\end{proof}

Finally, as in the inequality \eqref{32sinx} we obtain:

\begin{lemma}
The following inequality is valid for all $0< x \leq \frac{\pi}{32}$: 
\begin{multline}
\label{3sin2x}
-3\biggl\{ 2x - \frac{(2x)^3}{3!} + \frac{(2x)^5}{5!}
- \frac{(2x)^7}{7!} + \frac{(2x)^9}{9!} \biggr\} < -3\sin 2x
\\
< -3\biggl\{  2x - \frac{(2x)^3}{3!} + \frac{(2x)^5}{5!}
- \frac{(2x)^7}{7!} + \frac{(2x)^9}{9!} - \frac{(2x)^{11}}{11!}
\biggr\}.
\end{multline}
\qed
\end{lemma}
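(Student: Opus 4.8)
The plan is to mirror exactly the argument behind the Lemma yielding \eqref{32sinx}, the only genuine novelty being the reversal of the inequalities forced by the negative factor $-3$. First I would set $y := 2x$ and record the alternating-series bracketing for the sine truncated after the degree-$9$ and degree-$11$ terms. Since, for $y \geq 0$, the partial sum of the MacLaurin series of $\sin y$ that terminates in a term of positive sign overestimates $\sin y$ while the partial sum terminating in a term of negative sign underestimates it, one has
\begin{equation*}
y - \frac{y^3}{3!} + \frac{y^5}{5!} - \frac{y^7}{7!} + \frac{y^9}{9!} - \frac{y^{11}}{11!} < \sin y < y - \frac{y^3}{3!} + \frac{y^5}{5!} - \frac{y^7}{7!} + \frac{y^9}{9!}.
\end{equation*}

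In the regime actually needed for the Theorem, namely $0 \leq x \leq \frac{\pi}{32}$ so that $0 \leq y \leq \frac{\pi}{16}$, the successive terms $\frac{y^{2k+1}}{(2k+1)!}$ are strictly decreasing, and the bracketing is precisely the elementary estimate for a convergent alternating series with monotonically decreasing terms, exactly as invoked for \eqref{32sinx}. Should one insist on the inequality for \emph{all} $y \geq 0$, where monotonicity of the terms eventually fails, it nonetheless holds and can be secured by the standard device of integrating the chain $-1 \leq \cos t \leq 1$ repeatedly from $0$ to $y$, which reproduces the bracketing partial sums one degree at a time.

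The decisive step is the sign flip. Multiplying the displayed chain, with $y = 2x$, by the negative constant $-3$ reverses both inequalities, sending the upper bracket $2x - \frac{(2x)^3}{3!} + \cdots + \frac{(2x)^9}{9!}$ to the \emph{left}-hand (smaller) side of \eqref{3sin2x} and the lower bracket $2x - \frac{(2x)^3}{3!} + \cdots - \frac{(2x)^{11}}{11!}$ to the \emph{right}-hand (larger) side. This produces precisely the asserted inequality.

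The computation is otherwise entirely routine; the single point demanding care, and the only place an error could plausibly creep in, is tracking the reversal of the inequality signs produced by the factor $-3$, so that the partial sum ending in $+\frac{(2x)^9}{9!}$ correctly becomes the \emph{lower} bound and the one ending in $-\frac{(2x)^{11}}{11!}$ the \emph{upper} bound for $-3\sin 2x$.
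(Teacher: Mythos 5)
Your proof is correct and follows essentially the same route as the paper, which disposes of this lemma with a bare \qed on the strength of the remark that a convergent alternating series is bracketed by two consecutive partial sums, the sign reversal under multiplication by $-3$ being left tacit. Your additional observation that the naive alternating-series estimate needs the terms to be decreasing (which fails for large $y=2x$) and that the bracketing for all $y\geq 0$ is instead secured by repeatedly integrating $-1\leq\cos t\leq 1$ is a point the paper silently glosses over, and it is the right way to justify the claim at the stated level of generality.
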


Now that we have set up the technical inequalities necessary in our
main proof, we enter into its details.

\begin{proof}[Proof of the main theorem]
If we substitute the inequalities \eqref{32sinx}, \eqref{4tanx} and
\eqref{3sin2x} into the formula for $f(x)$, \eqref{fx}, we obtain
\begin{equation}
\frac{1}{105}x^6 + \frac{1}{360}x^8 + \frac{2776}{2338875}x^{10}
< \frac{f(x)}{\pi} - 1
< \frac{1}{105}x^6 + \frac{1}{360}x^8 + \frac{10531}{26507250}x^{10}.
\end{equation}

Now, the left-hand side
\begin{equation}
\frac{1}{105} x^6 + \frac{1}{360} x^8 + \frac{2776}{2338875} x^{10}
= \frac{1}{105} x^6 \biggl\{ 1 + \frac{7}{24} x^2
+ \frac{2776}{22275} x^4 \biggr\} > \frac{1}{105} x^6
\end{equation}
for all positive~$x$. So, we have proven the lower bound \eqref{bound}
and that the constant $\dfrac{1}{105}$ cannot be replaced by a bigger
one, i.e., \emph{it is the best possible}.

For the upper bound, we observe that for
$0 < x \leq \frac{\pi}{32}$,
\begin{align*}
\frac{1}{105} x^6 + \frac{1}{360} x^8 + \frac{10531}{26507250} x^{10}
&= \frac{1}{105} x^6 \biggl\{ 1 + \frac{7}{24} x^2
+ \frac{10531}{252450} x^4 \biggr\}
\\
&< \frac{1}{105} x^6 \biggl\{ 1
+ \frac{7}{24} \Bigl( \frac{\pi}{32} \Bigr)^2
+ \frac{10531}{252450} \Bigl( \frac{\pi}{32} \Bigr)^4 \biggr\}
\\
&= \frac{1}{104.705\dots} x^6 < \frac{1}{104\frac{7}{10}} x^6,
\end{align*}
which is the upper bound presented in~\eqref{bound}.

This, together with \eqref{fx1} complete the proof of the theorem.
\end{proof}


\section{Numerical Error Studies}

Chakrabarti and Hudson present a table of numerical studies of the
error in their approximative formula~\eqref{CH1}. They define: a
number $\alpha$ has \emph{precision}~$n$ if
\begin{equation}
|\alpha - \pi| < \frac{1}{10^n} \,.
\end{equation}
We already pointed out the standard definition (see~\cite{Hld}): an
approximation $\overline{N}$ approximates the true value $N$ with $n$
\emph{correct significant digits} if the positive relative error
\begin{equation}
\frac{|N - \overline{N}|}{N} < \frac{(\half)}{10^n} \,.
\end{equation}
Of course the two definitions will coincide sometimes, and sometimes
not. However, we note that
$$
|\alpha - \pi| < \frac{1}{10^n} 
\iff \frac{|\alpha - \pi|}{\pi} < \frac{1}{\pi 10^n}
< \frac{(\half)}{10^n}
$$
so that \emph{an approximation with precision~$n$ is always correct to $n$
significant digits}. It is sufficient, but not necessary, since if
$$
\frac{1}{\pi 10^n} < \frac{|\alpha - \pi|}{\pi}
< \frac{(\half)}{10^n} \,,
$$
then $\alpha$ \emph{will have precision $n - 1$ but still have $n$ correct
significant digits}.


\section{Earlier approximations}

We can rewrite the approximation $1 \approx \dfrac{f(x)}{\pi}$ as
follows:
\begin{equation}
\label{CH3}
\frac{\sin x}{x} \approx \frac{15\cos x}{2 + 16\cos x - 3\cos^2 x} \,,
\end{equation}
where we know that the approximate value is \emph{smaller} than the true
value by about $\dfrac{x^6}{105}$.

Now, $\dfrac{\sin x}{x}$ has the following \emph{continued fraction
expansion}:
\begin{equation}
\label{cf}
\frac{\sin x}{x}
= 1 - \cfrac{ \frac{1\cdot 2}{1\cdot 3} \sin^2 \frac{x}{2}}
{1 - \cfrac{ \frac{1\cdot 2}{3\cdot 5} \sin^2 \frac{x}{2}}
{1 - \cfrac{ \frac{3\cdot 4}{5\cdot 7} \sin^2 \frac{x}{2}}
{1 - \cfrac{\frac{3\cdot 4}{7\cdot 9} \sin^2 \frac{x}{2}}
{1-\cdots}}}}
\end{equation}
The first few convergents are:
\begin{align}
\frac{p_1}{q_1} &= \frac{2 + \cos x}{3}\,;
&\text{error} &= - \frac{1}{180} x^4 + \cdots  \word{(\textsc{Snell})}
\nonumber \\
\frac{p_2}{q_2} &= \frac{9 + 6\cos x}{14 + \cos x}\,;
&\text{error} &= - \frac{1}{2100} x^6 + \cdots \word{(\textsc{Newton})}
\nonumber \\   
\frac{p_3}{q_3} &= \frac{51 + 48\cos x + 6\cos^2x}{80 + 25\cos x}\,;
&\text{error} &= - \frac{1}{44100} x^8 + \cdots
\end{align}
where we have used $\sin^2 \frac{x}{2} = \half(1 - \cos x)$. (For all
these results, see~\cite{Vah}.) Every one of these approximations is
\emph{larger} than the true value and is the \emph{best possible}.

The Chakrabarti--Hudson approximation \eqref{CH3} has an error about\emph{ twenty times greater} than that of Newton's formula, although both are of order $o(x^6)$, nor is it as simple. Indeed, it has the
formal complexity of the \emph{third} convergent without the latter's
extraordinary accuracy. Nevertheless, it is the only approximation in
\emph{defect}, its accuracy is still quite good, and it is interesting
that such an \emph{ad hoc} derivation produced such an intriguing
approximation.

\subsubsection*{Acknowledgment}

Support from the Vicerrector\'ia de Investigaci\'on of the University
of Costa Rica is acknowledged.


\end{document}